\numberwithin{equation}{section}
\newtheorem{thm}{Theorem}[section]
\newtheorem{cor}[thm]{Corollary}
\newtheorem{lem}[thm]{Lemma}
\newtheorem{defn}[thm]{Definition}
\newtheorem{rmk}[thm]{Remark}
\newcommand{\pt}{\partial}
\DeclareMathOperator{\dive}{div}
\begin{document}

\title[Boundary  regularity of Dirac-harmonic maps]{Boundary partial regularity for a class of  Dirac-harmonic maps}
\author[J\"urgen Jost]{J\"urgen Jost}
\address{Max Planck Institute for Mathematics in the Sciences, Inselstrasse 22, 04103 Leipzig, Germany}
\email{jost@mis.mpg.de}

\author[Jingyong Zhu]{Jingyong Zhu}
\address{School of Mathematics, Sichuan University, Chengdu 610065, China}
\email{jzhu@scu.edu.cn}

\subjclass[2010]{35J57; 53C43; 58E20}
\keywords{Dirac-harmonic maps; boundary regularity; energy monotonicity inequality.}


\begin{abstract}
In this paper, we prove the boundary partial regularity for a class of coupled Dirac-harmonic maps satisfying a certain energy monotonicity inequality near the boundary.

\end{abstract}
\maketitle

\vspace{2em}
\section{Introduction}
Variational principles play an important role in both geometry and physics and one of the key problems with applications in both fields is the variational problem associated to the Dirichlet energy of maps between Riemannian manifolds. The critical points of this functional are called harmonic maps.

The Dirichlet energy is one of the simplest possible functionals involving first derivatives. In the theory of harmonic maps, however, it is studied for a nonlinear target, which gives rise to nonlinear Euler-Lagrange equations for its critical points.  While  it is not dificult to construct weak solutions, to show their regularity is much harder and in fact,  in general not possible. The harmonic map system belongs to a class for which no general regularity results hold. The best one can hope for is partial regularity.  One needs quite subtle arguments (see e.g. \cite{moser2005partial}\cite{lin2008analysis})
that use  the special structure of the equations or some rather deep results from harmonic analysis to prove (partial) regularity regularity. In turn, such methods then often also apply to  solutions of related problems, like perturbations and generalizations of the harmonic map problem or associated parabolic problems.

When the curvature is non-positive, one can use convexity arguments to obtain regularity (see \cite{jost2017riemannian} and the references therein).  
In contrast, weakly harmonic maps into manifolds whose sectional curvatures are either positive or change signs can behave quite wildly  in higher dimensions. Nevertheless, there are several important partial regularity theorems for  stationary harmonic maps (see e.g. \cite{bethuel1993singular}). These stationary harmonic maps satisfy the monotonicity formula, which plays an important role for proving  partial regularity. The boundary partial regularity is a more complicated  than the interior one. In \cite{wang1999boundary}, Wang proved the boundary partial regularity for a class of harmonic maps satisfying a certain energy monotonicity inequality near the boundary.

There are various generalizations of harmonic maps. Motivated by the supersymmetric nonlinear sigma model in quantum field theory, Dirac-harmonic maps from Riemann surfaces were introduced in \cite{chen2006dirac}. They are critical points of an unbounded conformally invariant functional involving two fields, a map from a Riemann surface into a Riemannian manifold and a section of a Dirac bundle which is the usual spinor bundle twisted with the pull-back of the tangent bundle of the target by the map. Since Dirac-harmonic maps are solutions to a coupled nonlinear elliptic system,  the regularity problem is challenging.

The interior regularity and partial regularity were proved in \cite{wang2009regularity} through the $\epsilon$-regularity theorem.
 In this paper, we prove the boundary partial regularity for coupled Dirac-harmonic maps (see the definition in the Section 2) satisfying a certain energy monotonicity inequality near the boundary.

\begin{thm}\label{holder}
Let $(u,\psi)$ be a coupled weakly  Dirac-harmonic map from a compact manifold  $M$ of dimension $m\geq2$ into another compact Riemannian manifold $N$ of dimension $n\geq2$, and $u=\phi\in C^{0,1}(\partial M,N)$ in the sense of trace. Suppose  $\psi\in L^{q}(M)$ for some $q>2m$ and  $u$ satisfies the energy monotonicity inequality
\begin{equation}\label{monotonicity}
    s^{2-m}\int_{M\cap B_s(x)}|\nabla u|^2\leq(1+C_0r+C_0s)r^{2-m}\int_{M\cap B_r(x)}|\nabla u|^2+C_0(r-s)
\end{equation}
for all $x\in M_{\rho_0}:=\{x\in\bar M: {\rm dist}(x,\partial M)< \rho_0\}$ and $0<s\leq r<\rho_0$. Here $C_0$ and $\rho_0$ depend only on $M$, $N$ and $\phi$.  
Then there exist $\alpha_0\in (0,1)$ and a closed set $\mathcal{S}\subset  M_{\rho_0}$ with $H^{m-2}(\mathcal{S})=0$ such that $u\in C^{\alpha_0}( M_{\rho_0}\setminus\mathcal{S},N)$. Moreover, if $\phi\in C^{1,1}(\partial M,N)$ and $q>\frac{4m}{\alpha_0}$, then $u\in C^{1,\mu}( M_{\rho_0}\setminus\mathcal{S},N)$ for some $\mu\in(0,1)$.
\end{thm}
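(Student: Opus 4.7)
\medskip

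\noindent The plan is to adapt Wang's boundary partial regularity scheme for stationary harmonic maps~\cite{wang1999boundary} and feed into it the interior $\epsilon$-regularity technology for coupled Dirac-harmonic maps from~\cite{wang2009regularity}. The crux is a boundary $\epsilon$-regularity statement: there exist $\epsilon_0,\alpha_0>0$ such that whenever $x_0\in\partial M$, $0<r<\rho_0/2$, and
\[
 r^{2-m}\!\int_{M\cap B_r(x_0)}\!|\nabla u|^2 + r^{2-2m/q}\|\psi\|_{L^q(M\cap B_r(x_0))}^2 \leq \epsilon_0^2,
\]
one has $u\in C^{\alpha_0}(M\cap B_{r/2}(x_0),N)$. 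Because $q>2m$ the spinor term scales with a positive exponent and is automatically small at small scales, while the smallness of the first term at good points is exactly what the monotonicity inequality \eqref{monotonicity} propagates. The singular set therefore ends up being controlled purely by the map energy density, as in the classical harmonic case.

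\medskip

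\noindent To establish the boundary $\epsilon$-regularity I would flatten $\partial M$ near $x_0$, pass to the half-ball $B_r^+\subset\Real^m_+$, and isometrically embed $N\hookrightarrow\Real^K$. After subtracting a Lipschitz extension of $\phi$, which uses $\phi\in C^{0,1}$ and leaves a controlled inhomogeneity, the map equation retains the form
\[
-\Delta u = A(u)(\nabla u,\nabla u) + \mathrm{Re}\,\langle\psi,\mathcal{R}(u)\psi\rangle,
\]
whose first nonlinearity keeps the antisymmetric Jacobian structure essential for the H\'elein/Rivi\`ere approach. Combining a Coulomb-gauge decomposition with Adams--Riesz potential estimates, and using the Dirac equation $\slashed{\pt}\psi = \Gamma(u)(\nabla u,\psi)$ together with $\psi\in L^q$ to control the spinor bilinear in a Morrey norm, one derives a decay inequality
\[
\Phi(\tau r) \leq \tfrac12\Phi(r) + C(\tau)\, r^{2\beta}
\]
for a suitable boundary excess $\Phi$ built from $|\nabla u|$ and $|\psi|^2$. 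Iteration produces a Morrey--Campanato estimate of the form $\sup_{y,\rho}\rho^{2-m+2\alpha_0}\!\int_{B_\rho(y)}|\nabla u|^2<\infty$ and hence the claimed H\"older continuity.

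\medskip

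\noindent With the boundary $\epsilon$-regularity in hand, set
\[
\mathcal{S} := \Bigl\{x\in M_{\rho_0} : \liminf_{r\downarrow 0}\, r^{2-m}\!\!\int_{M\cap B_r(x)}\!|\nabla u|^2 \geq \epsilon_0^2/2\Bigr\}.
\]
This is closed by upper semicontinuity of the density furnished by \eqref{monotonicity}, and a standard Vitali covering argument based on the monotonicity inequality gives $H^{m-2}(\mathcal{S})=0$. For the $C^{1,\mu}$ upgrade, once $u\in C^{\alpha_0}$ on a boundary patch the Dirac equation has $C^{\alpha_0}$ coefficients and the hypothesis $q>4m/\alpha_0$ is tuned so that $|\psi|^2$ lies in a Morrey space whose Riesz potential is H\"older of some exponent $\mu>0$; boundary Schauder estimates for the map equation with Dirichlet data $\phi\in C^{1,1}$ then deliver $u\in C^{1,\mu}$.

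\medskip

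\noindent I expect the main obstacle to be the boundary $\epsilon$-regularity step. Interior arguments exploit the Jacobian/antisymmetric structure of $A(u)(\nabla u,\nabla u)$ and of the spinor cross term, but reflecting across the flat face preserves this structure only after $\phi$ has been subtracted, and the resulting inhomogeneity must be kept subcritical in the Morrey scaling. One must simultaneously match the scaling of the map quadratic, the spinor bilinear in $L^q$, and the boundary perturbation from $\phi$ so that a single iteration closes; this is precisely where the hypotheses $q>2m$ and $\phi\in C^{0,1}$ become essential.
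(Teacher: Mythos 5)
Your high-level architecture matches the paper: a boundary small-energy (``$\epsilon$-regularity'') statement, the singular set defined by the density $\liminf r^{2-m}\int_{B_r}|\nabla u|^2$ with $H^{m-2}(\mathcal{S})=0$ by a Vitali covering, and a Campanato/Schauder upgrade to $C^{1,\mu}$ once $\phi\in C^{1,1}$ and $q>4m/\alpha_0$. However, the technical heart of your proposal --- ``combining a Coulomb-gauge decomposition with Adams--Riesz potential estimates'' at the boundary --- is precisely the route the paper explicitly rules out. The paper's own remark emphasizes that the Coulomb gauge construction that handles the curvature-spinor source term in the interior ``is no longer available to boundary points,'' and that different techniques are needed there. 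A Coulomb frame is built from a global minimization over gauge rotations and does not interact well with the Dirichlet data $u=\phi$ on $\partial M$; the antisymmetric structure of the connection form is what gets destroyed (not preserved) under the reflection step you gesture at, and you offer no substitute for it.

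What the paper actually does in place of your Coulomb-gauge step is the following. It uses H\'elein's \emph{adapted moving tangent frames} $\{e_l\}$ (not Coulomb frames on an auxiliary bundle), imposes the Neumann-type compatibility $\dive\langle e_i, \partial e_j/\partial\nu\rangle = 0$, extends these frames by \emph{even} reflection and $u-\phi$ by \emph{odd} reflection across the flat boundary, and then applies Bethuel's Hodge decomposition to $\bar e_l\wedge d(\eta u_\phi) = d\alpha_l + d^*\beta_l$. The $\beta_l$ piece is controlled via the Hardy--BMO duality (the compensated compactness of $\{\bar e_l, \eta u_\phi\}_{ij}$), and the $\alpha_l$ piece is handled by a careful even/odd test-function argument that produces a divergence-form equation for $\alpha_l$ with a right-hand side that collects the reflected curvature-spinor term $R^l_{kij}(\bar u)\langle\bar\psi^i,U^k\cdot\bar\psi^j\rangle$ in Hardy-dual form. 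A second subtlety you miss: the decay iteration is not run on the scaled Dirichlet energy $\Phi(r)=r^{2-m}\int|\nabla u|^2$ but on the $L^1$-gradient Morrey quantity $M(x,r)=\sup\{\rho^{1-m}\int_{B_\rho(y)}|Du|: B_\rho(y)\subset B_r(x)\}$; the Hodge/Hardy estimates give bounds for $\int|D\alpha_l|$ and $\int|D\beta_l|$, hence for the $L^1$ gradient, and this is what closes the iteration $M(x,\theta r)\leq \tfrac14 M(x,r)+Cr + r^{1-2m/q}$. Your $L^2$-based excess $\Phi$ would not plug directly into those estimates. So while your scaffolding and the final covering and Campanato steps are consistent with the paper, the key Lemma establishing the decay at the boundary is missing, and the method you propose to fill it is the one the authors state does not extend to the boundary.
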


\begin{rmk}
  The map part can be coupled with the spinor part which leads to difficulties not present for ordinary harmonic maps. In the coupled case, the tension field of the map component of a coupled Dirac-harmonic map is not zero and couples the curvature of the target manifold and the spinor field.  In the interoir of the domain, this term can be dealt with by applying the Coulomb gauge construction to the  Dirac-harmonic map equation. However, this method is no longer available to boundary points. Therefore, we need different techniques. \\
  Our scheme will develop very precise integral growth conditions for the fields of involved. We shall compare the solutions with those of various auxiliary equations, and to get these equations, we shall use H\'elein's construction of adapted tangent frames. 
\end{rmk}

Combining this theorem with the interior regularity result in \cite{wang2009regularity}, we have the following global regularity result.
\begin{cor}\label{}
Let $(u,\psi)$ be a coupled weakly Dirac-harmonic map from a compact manifold  $M$ of dimension $m\geq2$ into another compact Riemannian manifold $N$ of dimension $n\geq2$, and $u=\phi\in C^{0,1}(\partial M,N)$ in the sense of trace. Suppose  $\psi\in L^{q}(M)$ for some $q>2m$ and  $u$ satisfies \eqref{monotonicity} for all $x\in\bar{M}$ with $C_0$ and $\rho_0$ depend only on $M$, $N$ and $\phi$.
Then there exist $\alpha_0\in (0,1)$ and a closed set $\mathcal{S}\subset  \bar{M}$ with $H^{m-2}(\mathcal{S})=0$ such that $u\in C^{\alpha_0}(\bar{M}\setminus\mathcal{S},N)$. Moreover, if $\phi\in C^{1,1}(\partial M,N)$ and $q>\frac{4m}{\alpha_0}$, then $u\in C^{1,\mu}(\bar{M}\setminus\mathcal{S},N)$ for some $\mu\in(0,1)$.
\end{cor}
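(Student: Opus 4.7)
The plan is to combine the two regularity results at our disposal: Theorem \ref{holder} near the boundary, and the interior $\epsilon$-regularity / partial regularity theorem of Wang \cite{wang2009regularity} in the interior. Since the monotonicity inequality \eqref{monotonicity} is now assumed to hold for all $x\in\bar M$ (not merely in the collar $M_{\rho_0}$), both sets of hypotheses are simultaneously available, and the task reduces to a compactness / covering argument on $\bar M$.

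First I would split the compact manifold $\bar M$ into a boundary region and a closed interior region: set $M':=\{x\in M:\mathrm{dist}(x,\partial M)\geq \rho_0/2\}$, so that $\bar M = M_{\rho_0}\cup M'$ and $M'$ is contained in the interior of $M$. Applying Theorem \ref{holder} to $(u,\psi)$ gives an exponent $\alpha_0\in(0,1)$ and a closed set $\mathcal{S}_1\subset M_{\rho_0}$ with $H^{m-2}(\mathcal{S}_1)=0$ such that $u\in C^{\alpha_0}(M_{\rho_0}\setminus\mathcal{S}_1, N)$, upgraded to $C^{1,\mu}$ on the same set under the stronger hypotheses $\phi\in C^{1,1}(\partial M, N)$ and $q>4m/\alpha_0$.

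Second, I would invoke the interior partial regularity result of \cite{wang2009regularity} on the compact set $M'$. Because \eqref{monotonicity} holds at every interior point, the standard $\epsilon$-regularity scheme of \cite{wang2009regularity} applies verbatim and produces a closed singular set $\mathcal{S}_2\subset M\setminus\partial M$ with $H^{m-2}(\mathcal{S}_2)=0$, together with a Hölder exponent $\alpha_1\in(0,1)$ such that $u\in C^{\alpha_1}$ on $(M\setminus\partial M)\setminus\mathcal{S}_2$. The same scheme upgrades this to $C^{1,\mu'}$ on $(M\setminus\partial M)\setminus\mathcal{S}_2$ whenever $\phi\in C^{1,1}$ and $q>4m/\alpha_1$ (the boundary datum enters \cite{wang2009regularity} only through the global smallness of energy, which is unaffected by the $C^{1,1}$ hypothesis).

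Finally, I would set $\mathcal{S}:=\mathcal{S}_1\cup\mathcal{S}_2\subset\bar M$ and replace the exponent $\alpha_0$ by $\min(\alpha_0,\alpha_1)$ (and, for the second conclusion, the threshold $q>4m/\alpha_0$ by the corresponding maximum). Then $\mathcal{S}$ is closed, $H^{m-2}(\mathcal{S})=0$, and the two Hölder estimates match on the overlap $M_{\rho_0}\cap M'\setminus\mathcal{S}$, yielding $u\in C^{\alpha_0}(\bar M\setminus\mathcal{S},N)$ globally, and $u\in C^{1,\mu}(\bar M\setminus\mathcal{S},N)$ under the stronger hypotheses. There is no genuine obstacle here beyond verifying compatibility of the two exponents on the overlap, since Hölder continuity is a local property; all the analytic work has already been done in Theorem \ref{holder} and in \cite{wang2009regularity}.
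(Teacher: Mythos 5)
Your proposal is correct and is exactly the route the paper intends: the paper offers no separate proof of this corollary beyond the one-line remark that it follows by combining Theorem~\ref{holder} (boundary collar) with the interior partial regularity of \cite{wang2009regularity}, and your argument fills in those details. One small cleanup worth making: rather than taking $\mathcal{S}=\mathcal{S}_1\cup\mathcal{S}_2$, which requires a separate check that the union is relatively closed in $\bar M$ (since $\mathcal{S}_1$ is only relatively closed in the collar $M_{\rho_0}$ and $\mathcal{S}_2$ only in $M\setminus\partial M$), it is cleaner to follow the paper's own construction in the proof of Theorem~\ref{holder} and define $\mathcal{S}$ globally by the energy-density threshold, $\mathcal{S}:=\{x\in\bar M:\ \liminf_{r\to0}r^{2-m}\int_{B_r(x)\cap M}|Du|^2>\varepsilon_*^2\}$ with $\varepsilon_*$ the smaller of the interior and boundary $\epsilon$-regularity thresholds; this set is automatically closed in $\bar M$, has vanishing $H^{m-2}$-measure by the covering argument driven by \eqref{monotonicity}, and the two $\epsilon$-regularity theorems give $C^{\alpha_0}$ (with $\alpha_0$ the smaller of the two exponents) on its complement, and $C^{1,\mu}$ under the stronger hypotheses. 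Also note that the interior $C^{1,\mu}$ upgrade in \cite{wang2009regularity} does not involve the boundary datum at all, so the $\phi\in C^{1,1}$ hypothesis is irrelevant there; only the integrability $q>4m/\alpha_0$ is used in the interior.
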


\begin{rmk}
  We should stress here that the coupled case is the difficult one. In the uncoupled case, the map part is simply harmonic, and so, the (partial) regularity for harmonic maps discussed above can be applied. \\
In any case, uncoupled or coupled, higher order regularity then follows from the standard elliptic theory. If $u\in W^{1,m}$, then $\mathcal{S}=\emptyset$.
\end{rmk}

The rest of the paper is organized as follows: In Section 2, we recall the basic definitions and some facts about Dirac-harmonic maps. In Section 3, we show the small energy boundary regularity. In Section 4, we prove the Theorem \ref{holder} 

{\bf Acknowledgements}
The second author would like to thank the support by the National Natural Science Foundation of China (Grant No. 12201440) and the Fundamental Research Funds for the Central Universities.

\vspace{2em}

\section{Preliminaries}

Let $(M, g)$ be a compact manifold of dimension $m\geq2$. On the complex spinor bundle $\Sigma M$, we denote the Hermitian inner product by $\langle\cdot, \cdot\rangle_{\Sigma M}$. For  $X\in\Gamma(TM)$ and $\xi\in\Gamma(\Sigma M)$, the Clifford multiplication is  skew-adjoint:
\begin{equation*}
\langle X\cdot\xi, \eta\rangle_{\Sigma M}=-\langle\xi, X\cdot\eta\rangle_{\Sigma M}.
\end{equation*}
Let $\nabla$ be the Levi-Civita connection on $(M,g)$. There is a unique  connection (also denoted by $\nabla$) on $\Sigma M$ compatible with $\langle\cdot, \cdot\rangle_{\Sigma M}$.  Choosing a local orthonormal basis $\{e_{\beta}\}_{\beta=1,2}$ on $M$, the usual Dirac operator is defined as $\slashed\partial:=e_\beta\cdot\nabla_\beta$, where $\beta=1,2$. Here and in the sequel, we use the Einstein summation convention. A reference fo  spin geometry is \cite{lawson1989spin}.

Let $u$ be a smooth map from $M$ to another compact Riemannian manifold $(N, h)$ of dimension $n\geq2$. Let $u^*TN$ be the pull-back bundle of $TN$ by $u$ and consider the twisted bundle $\Sigma M\otimes u^*TN$. On this bundle there is a metric $\langle\cdot,\cdot\rangle_{\Sigma M\otimes u^*TN}$ induced from the metric on $\Sigma M$ and $u^*TN$. Also, we have a connection $\tilde\nabla$ on this twisted bundle naturally induced from those on $\Sigma M$ and $u^*TN$. In local coordinates $\{y^\mu\}_{\mu=1,\dots,n}$, the section $\psi$ of $\Sigma M\otimes u^*TN$ is written as
$$\psi=\psi^\mu\otimes\partial_{y^\mu}(\phi),$$
where each $\psi^\mu$ is a usual spinor on $M$. We also have the following local expression of $\tilde\nabla$
$$\tilde\nabla\psi=\left(\nabla\psi^\mu+\Gamma_{\lambda\sigma}^\mu(u)\nabla \phi^\lambda\cdot\psi^\sigma\right)\otimes\partial_{y^\mu}(u),$$
where $\Gamma^\mu_{\lambda\sigma}$ are the Christoffel symbols of the Levi-Civita connection of $N$. The Dirac operator along the map $\phi$ is defined as
\begin{equation}\label{dirac}
\slashed{D}\psi:=e_\alpha\cdot\tilde\nabla_{e_\alpha}\psi=\left(\slashed\partial\psi^\mu+\Gamma_{\lambda\sigma}^\mu(u)\nabla_{e_\alpha}u^\lambda(e_\alpha\cdot\psi^\sigma)\right)\otimes\partial_{y^\mu}(u),
\end{equation}
which is self-adjoint (see \cite{jost2017riemannian}). Sometimes, we use $\slashed{D}^u$ to distinguish the Dirac operators along different maps. In \cite{chen2006dirac}, the authors  introduced the  functional
\begin{equation*}\begin{split}
L(\phi,\psi)&:=\frac12\int_M\left(|du|^2+\langle\psi,\slashed{D}\psi\rangle_{\Sigma M\otimes u^*TN}\right)\\
&=\frac12\int_M\left( h_{\lambda\sigma}(u)g^{\alpha\beta}\frac{\partial u^\lambda}{\partial x^\alpha}\frac{\pt u^\sigma}{\pt x^\beta}+h_{\lambda\sigma}(u)\langle\psi^\lambda,\slashed{D}\psi^\sigma\rangle_{\Sigma M}\right).
\end{split}
\end{equation*}
They computed the Euler-Lagrange equations of $L$:
\begin{numcases}{}
   \tau^\kappa(u)-\frac12R^\kappa_{\lambda\mu\sigma}\langle\psi^\mu,\nabla u^\lambda\cdot\psi^\sigma\rangle_{\Sigma M}=0,  \label{eldh1}\\
   \slashed{D}\psi^\mu:=\slashed\partial\psi^\mu+\Gamma_{\lambda\sigma}^\mu(u)\nabla_{e_\alpha}u^\lambda(e_\alpha\cdot\psi^\sigma)=0, \label{eldh2}
\end{numcases}
where $\tau^\kappa(u)$ is the $\kappa$-th component of the tension field \cite{jost2017riemannian} of the map $u$ with respect to the coordinates on $N$, $\nabla u^\lambda\cdot\psi^\sigma$ denotes the Clifford multiplication of the vector field $\nabla u^\lambda$ with the spinor $\psi^\sigma$, and $R^\kappa_{\lambda\mu\sigma}$ stands for the components of the Riemann curvature tensor of the target manifold $N$. By denoting
\begin{equation}\label{curvature term}
    \mathcal{R}(u,\psi):=\frac12R^\kappa_{\lambda\mu\sigma}\langle\psi^\mu,\nabla u^\lambda\cdot\psi^\sigma\rangle_{\Sigma M}\pt_{y^\kappa},
\end{equation}
 we can write \eqref{eldh1} and \eqref{eldh2} in the following global form:
\begin{numcases}{}
\tau(u)=\mathcal{R}(u,\psi), \label{geldh1} \\
\slashed{D}\psi=0,  \label{geldh2}
\end{numcases}
and call the solutions $(\phi,\psi)$ Dirac-harmonic maps from $M$ to $N$. Moreover, if $\mathcal{R}(u,\psi)\neq0$, the Dirac-harmonic map $(\phi,\psi)$ is called coupled.

By \cite{nash1956imbedding}, we can isometrically embed $N$ into a Euclidean space $\mathbb{R}^K$ for some constant $K$. Then \eqref{geldh1}-\eqref{geldh2} is equivalent to the  following system:
\begin{numcases}{}
		\Delta_g{u}=II(du,du)+Re(P(\mathcal{S}(du(e_\beta),e_{\beta}\cdot\psi);\psi)), \label{map eq}\\
		\slashed{\partial}\psi=\mathcal{S}(du(e_\beta),e_{\beta}\cdot\psi).
\end{numcases}
Here $II$ is the second fundamental form of $N$ in $\mathbb{R}^K$,
\begin{equation*}
\mathcal{S}(du(e_\beta),e_{\beta}\cdot\psi):=(\nabla{u^i}\cdot\psi^j)\otimes II(\partial_{z^i},\partial_{z^j}),
\end{equation*}
\begin{equation*}
Re(P(\mathcal{S}(du(e_\beta),e_{\beta}\cdot\psi);\psi)):=P(S(\partial_{z^k},\partial_{z^j});\partial_{z^i})Re(\langle\psi^i,du^k\cdot\psi^j\rangle).
\end{equation*}
where $i,j,k=1,\dots,K$, $P(\xi;\cdot)$ denotes the shape operator, defined by
\begin{equation*}
  \nabla_X\xi=-P(\xi;X)+\nabla^\perp\xi, \forall X\in \Gamma(TN)
\end{equation*}
and $Re(\zeta)$ denotes the real part of $\zeta\in\mathbb{C}$.

Here, we are interested in the regularity at the boundary. Therefore, we assume that 
 $M$ has a boundary, and we need to impose   well-defined and natural  boundary conditions for both the map and the spinor. For the map component, one can still use the Dirichlet boundary condition. For the spinor part, the chiral boundary operator ${\bf B}={\bf B}^\pm$ was first introduced in \cite{chen2013boundary} as follows:
\begin{equation}\label{bdy op}
\begin{split}
{\bf B}^\pm: L^2(\partial M, (\Sigma M\otimes u^*TN)|_{\partial M})&\to L^2(\partial M, (\Sigma M\otimes u^*TN)|_{\partial M})\\
\psi&\mapsto\frac12(Id\otimes Id\pm{\bf n}\cdot G\otimes Id)\psi
\end{split}\end{equation}
where ${\bf n}$ is the outward unit normal vector field on $\partial M$ and $G$ is the chirality operator for  spinors  introduced by Gibbons-Hawking-Horowitz-Perry \cite{Gibbons1983Positive}. It satisfies
\begin{equation}
G^2=Id, \ G^*=G, \ \nabla G=0, G\cdot X\cdot=-X\cdot G, \forall X\in\Gamma(TM).
\end{equation}

Our analytical strategy will involve precise integral growth estimates. 
 Important function spaces with such integral growth conditions are the  Morrey spaces.

\begin{defn}
For $p\geq1$, $0<\lambda\leq m$ and a domain $\Omega\subset \mathbb{R}^m$, the Morrey space $M^{p,\lambda}(\Omega)$ is defined by
\begin{equation}
M^{p,\lambda}(\Omega):=\{f\in L^p_{loc}(\Omega): \|f\|_{M^{p,\lambda}(\Omega)}<+\infty\},
\end{equation}
where
\begin{equation}
\|f\|_{M^{p,\lambda}}(\Omega)=\sup\{r^{\lambda-m}\int_{B_r(y)}|f|^p: B_r(y)\subset\Omega\}.
\end{equation}
\end{defn}
For certain ranges of $m,p,\lambda$, they embed into H\"older spaces, but for our problem, we are at a borderline situation and therefore need to derive  more refined estimates. 

\vspace{2em}

\section{Small energy boundary regularity}

The goal of this section is to prove the following small energy boundary regularity result.
\begin{thm}\label{sebr}
Suppose the  assumptions of Theorem \ref{holder} hold.  
There exist $\varepsilon_0>0$, $\delta_0\in (0,\rho_0)$ and $\alpha_0\in (0,1)$ such that if $u$ satisfies
\begin{equation}
    r_0^{2-m}\int_{M\cap B_{r_0}(x_0)}|\nabla u|^2\leq \varepsilon_0^2
\end{equation}
for some $x_0\in \partial M$ and $r_0\in(0,\delta_0)$, then
 $u\in C^{\alpha_0}( \bar{M}\cap B_{r_0/2}(x_0),N)$.
\end{thm}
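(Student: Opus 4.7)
The plan is to establish a Campanato-type decay
\[
r^{2-m}\int_{M\cap B_r(y)}|\nabla u|^2 \leq C\, r^{2\alpha_0}
\]
uniform for $y \in \bar M \cap B_{r_0/2}(x_0)$ and small $r$, which yields $C^{\alpha_0}$ regularity up to $\partial M$ by the Morrey--Campanato characterization. The smallness hypothesis provides the base case of the iteration, while the monotonicity inequality \eqref{monotonicity} propagates energy smallness between dyadic scales and continuous ones.

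First I would flatten $\partial M$ in a chart near $x_0$, so that the analysis takes place on half-balls $B_r^+\subset\mathbb{R}^m_+$ with flat portion $T_r\subset\{x_m=0\}$, and extend the Lipschitz data $\phi$ to $\bar\phi$ on the half-ball with $\|\nabla\bar\phi\|_{L^\infty}\leq C\|\phi\|_{C^{0,1}}$. Then I would compare $u$ with the $\mathbb{R}^K$-valued harmonic function $v$ on $B_r^+$ having $v=u$ on $\partial B_r^+$, so that $w:=u-v$ vanishes on $\partial B_r^+$ and satisfies, by \eqref{map eq},
\[
\Delta w = II(du,du) + Re\bigl(P(\mathcal S(du(e_\beta),e_\beta\cdot\psi);\psi)\bigr)\qquad\text{in } B_r^+.
\]
Since $v$ is harmonic with given trace, the classical boundary decay $(\theta r)^{2-m}\int_{B_{\theta r}^+}|\nabla v|^2\leq C\theta^{2}r^{2-m}\int_{B_r^+}|\nabla v|^2$ is available, so the task is to prove that $w$ is small, in a scale-invariant $H^1$ sense, in terms of the Dirichlet energy of $u$.

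The nonlinear term $II(du,du)$ is the principal difficulty. To extract smallness of the form $\varepsilon_0\int|\nabla w|^2$, I would use H\'elein's adapted orthonormal tangent frame on $u^*TN$ (available because the energy is small), which rewrites $II(du,du)$ as a sum of Jacobian-type expressions; these are then controlled via Coifman--Lions--Meyer--Semmes Hardy-space duality and Lorentz-space interpolation, using that $|\nabla u|$ has small $M^{2,2}$ norm. The spinor term is dominated by H\"older's inequality together with the hypothesis $\psi\in L^q$, $q>2m$: the factor $|\psi|^2|\nabla u|$ lies in a subcritical Morrey class and contributes only a lower-order additive error $C r^{2\sigma}$ for some $\sigma>0$. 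Combining, one would obtain, for $\theta\in(0,\tfrac14)$ and sufficiently small $\varepsilon_0,\delta_0$, the one-step improvement
\[
(\theta r)^{2-m}\int_{B_{\theta r}^+(y)}|\nabla u|^2 \leq \tfrac12\,r^{2-m}\int_{B_r^+(y)}|\nabla u|^2 + C\, r^{2\sigma},
\]
valid for $y\in T_{r_0/2}$. Iterating this, invoking \eqref{monotonicity} to slide between scales, and using the interior $\varepsilon$-regularity of \cite{wang2009regularity} at the small-energy centers that arise in the iteration, yields the desired Campanato decay.

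The hardest step is the frame-based estimate of $II(du,du)$ near $T_r$: H\'elein's Coulomb frame supplies the antisymmetric structure needed for the Hardy-space argument on the full ball, but it is not naturally compatible with the Dirichlet condition $u=\phi$ on $T_r$; this is precisely the obstruction flagged in the introductory remark. I expect to circumvent it by first subtracting the Lipschitz extension $\bar\phi$ and then performing an even reflection of $u-\bar\phi$ across $T_r$, which restores a full-ball geometry at the cost of commutator terms supported near $T_r$ that can be absorbed using the Lipschitz bound on $\bar\phi$ and the smallness of $\varepsilon_0$. Once this is in place, the spinor bookkeeping and the passage from dyadic to continuous scales via the monotonicity formula are technically routine.
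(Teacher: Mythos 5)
The two concrete issues with your proposal are the choice of reflection and the choice of quantity to iterate.

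First, the reflection. You propose to subtract the Lipschitz extension $\bar\phi$ and then perform an \emph{even} reflection of $u-\bar\phi$ across $T_r$. But $u-\bar\phi$ vanishes on $T_r$ without any control on $\partial_m(u-\bar\phi)$ there, so the even reflection produces a distributional Laplacian with a surface measure concentrated on $T_r$, namely a term of the form $2\,\partial_m(u-\bar\phi)\big|_{T_r}\,\delta_{\{x_m=0\}}$. This is singular, not a lower-order ``commutator supported near $T_r$'' that can be absorbed; it destroys the Hardy-space structure you are relying on. The correct choice, and the one the paper uses, is the \emph{odd} reflection of $u-\phi$ (combined with the \emph{even} reflection of the H\'elein frames $e_l$, which is consistent because the boundary-adapted frames satisfy $\dive\langle e_i,\partial_\nu e_j\rangle=0$ on $T_r$). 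With the odd reflection, $\Delta u_\phi$ is the odd reflection of $\Delta(u-\phi)$ with no singular boundary term, and the div--curl structure survives on the doubled ball.

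Second, you iterate the scale-invariant $L^2$ energy $r^{2-m}\int|\nabla u|^2$ directly, whereas the paper iterates the $L^1$-Morrey quantity $M(x,r)=\sup\{\rho^{1-m}\int_{B_\rho(y)}|Du|:B_\rho(y)\subset B_r(x)\}$. This is not a cosmetic difference: the CLMS/Hardy--BMO duality that controls the Jacobian terms produces $L^1$-type bounds on the gradient (after the Hodge decomposition $\bar e_l\wedge d(\eta u_\phi)=d\alpha_l+d^*\beta_l$, one tests the elliptic equations against an $L^\infty$-small auxiliary function $v_l$ and reads off $\tilde\rho^{1-m}\int|D\alpha_l^2|$), not directly an $L^2$ energy gain. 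Your sketch does not explain why $\int_{B_r^+}|\nabla w|^2$ would be dominated by $\varepsilon_0\int_{B_r^+}|\nabla u|^2$ plus a lower-order term; the Hardy pairing gives you $\|II(du,du)\|_{\mathcal H^1}\|w\|_{BMO}$, and without further input $\|w\|_{BMO}$ is only bounded, not small, so you lose the factor that would close the iteration. Relatedly, the harmonic comparison function $v$ with $v=u$ on all of $\partial B_r^+$ does not satisfy a clean boundary condition on the flat part $T_r$ (it equals the Lipschitz datum $\phi$ there, which need not be harmonic), so the decay $(\theta r)^{2-m}\int_{B_{\theta r}^+}|\nabla v|^2\leq C\theta^2 r^{2-m}\int_{B_r^+}|\nabla v|^2$ is not ``available'' without a reflection or a further decomposition of $v$. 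The paper avoids this entirely: it never compares $u$ with a single harmonic competitor, but instead splits the Hodge potential $\alpha_l=\alpha_l^1+\alpha_l^2$ into its harmonic part and a part solving a divergence-form equation sourced by Jacobians and the spinor curvature term, and the mean-value property is used only for $\alpha_l^1$, which \emph{is} harmonic on a full ball after the odd reflection. Switching to $M(x,r)$ and to the Hodge decomposition is what makes the one-step improvement (\ref{M monotone}) provable; you would need a genuinely new argument to run the same program at the $L^2$ level.

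The remaining ingredients of your plan (flattening the boundary, H\'elein frames with the extra Neumann-type condition at $T_r$, $L^q$-control of the spinor contribution yielding an error $O(r^{1-2m/q})$, combining with Wang's interior estimate, sliding scales via the monotonicity inequality) do match the paper.
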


Since Theorem \ref{sebr} is a local result, it suffices to prove the result on a domain in Euclidean space $\mathbb{R}^m$. Therefore, for simplicity, we assume that $M=\Omega\subset \mathbb{R}^m$, and define
\begin{equation}
M(x,r)=\sup\{\rho^{1-m}\int_{\Omega\cap B_\rho(y)}|Du|: B_\rho(y)\subset B_r(x)\},
\end{equation}

Note that if $u$ satisfies \eqref{monotonicity}, then $M(x,r)$ is bounded for $x\in \Omega_{\rho_0}$ and $r\in (0,{\rho_0}/3)$. The key step in proving Theorem \ref{sebr} is the following lemma.

\begin{lem}\label{M}
Again,  the same assumptions as in Theorem \ref{holder}.  
There exist $\varepsilon_1>0$, $\theta_1\in(0,1/4)$, and $\delta_1\in (0,\rho_0)$ such that for $x_1\in \Omega_{\delta_1}$ and $r_1\in(0,\delta_1)$, if 
\begin{equation}
    (2r_1)^{2-m}\int_{M\cap B_{2r_1}(x_1)}|\nabla u|^2\leq \varepsilon_1^2,
\end{equation}
then 
\begin{equation}\label{M monotone}
M(x_1,\theta_1r_1)\leq \frac14 M(x_1,r_1)+C_1r_1+r_1^{1-\frac{2m}{q}}.
\end{equation}
\end{lem}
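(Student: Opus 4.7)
The plan is to establish the decay by comparing $u$ on $B_{r_1}(x_1)\cap\Omega$ with a harmonic comparison map. After flattening $\partial\Omega$ near $x_1$ via a bilipschitz chart (which distorts the relevant integrals only by universal constants and is admissible since $\rho_0,C_0$ already depend on $M$), I work on the half-space and split $u=v+w$, where $v$ is harmonic in $B_{r_1}(x_1)\cap\Omega$ with $v=u$ on $\partial(B_{r_1}(x_1)\cap\Omega)$, and $w$ solves the Dirichlet problem $\Delta w=\Delta u$ with zero trace. The harmonic part will carry the principal decay, while $w$ will absorb the nonlinear terms and the spinor contribution.

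For $v$, I further decompose $v=v_{1}+v_{2}$ where $v_{2}$ is a Lipschitz extension of $\phi$ with $\|\nabla v_{2}\|_{L^{\infty}}\leq C[\phi]_{C^{0,1}}$ and $v_{1}$ is harmonic with $v_1=u-v_2$ on the spherical part and $v_1=0$ on the flat portion of the boundary. Reflection across the flat boundary produces a genuinely harmonic function on a full ball, so classical gradient mean-value estimates yield
$$\rho^{1-m}\!\int_{B_\rho(y)\cap\Omega}|\nabla v_{1}|\ \leq\ C\Big(\frac{\rho}{r_1}\Big)\,r_1^{1-m}\!\int_{B_{r_1}(x_1)\cap\Omega}|\nabla v_{1}|\qquad \text{for } B_\rho(y)\subset B_{\theta_1 r_1}(x_1),$$
while the $v_{2}$ contribution is controlled by $C[\phi]_{C^{0,1}}\rho$. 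Choosing $\theta_1$ small enough makes the first term at most $\tfrac18 M(x_1,r_1)$, and the $v_{2}$ piece contributes at most $C_1 r_1$. For $w$, the embedded system gives $\Delta w = A(u)(Du,Du)+\mathrm{Re}\,P(\mathcal{S}(Du,e_\beta\cdot\psi);\psi)$ with zero boundary trace; extending by odd reflection and using the half-space Green function, I obtain the pointwise Riesz-potential bound $|\nabla w(x)|\leq C\,I_1(|\Delta u|\chi_{B_{r_1}(x_1)\cap\Omega})(x)$.

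Two estimates on this Riesz potential remain. For the quadratic term, the monotonicity hypothesis places $|\nabla u|^{2}$ in the Morrey space $M^{1,2}$ with norm $\leq C\varepsilon_{1}^{2}$; writing $|\nabla u|^{2}=|\nabla u|\cdot|\nabla u|$ and applying an Adams-type estimate $I_1:M^{1,2}\to M^{1,1}$ (interpolated against $M(x_1,r_1)$ via a dyadic decomposition of $I_1$) yields $\rho^{1-m}\int_{B_\rho}I_1(|\nabla u|^{2})\leq C\varepsilon_{1}\,M(x_1,r_1)$, which is at most $\tfrac18 M(x_1,r_1)$ once $\varepsilon_{1}$ is chosen small. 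For the spinor term, Hölder's inequality together with $\psi\in L^{q}$, $q>2m$, and the energy monotonicity $\|\nabla u\|_{L^{2}(B_r)}^2\leq C r^{m-2}$ give $\int_{B_\rho\cap\Omega}|\nabla u||\psi|^{2}\leq C\rho^{m-1-2m/q}\|\psi\|_{L^q}^{2}$, and reinserting into the Riesz potential produces a contribution of size $C r_{1}^{1-2m/q}$. Summing the two halves and taking the supremum over $B_\rho(y)\subset B_{\theta_1 r_1}(x_1)$ gives \eqref{M monotone}. The step I expect to be the main obstacle is the quadratic term: it is at the borderline of integrability for the Morrey scale relevant to $M(x_1,\cdot)$, so both the $\varepsilon_1$-smallness of $\|\nabla u\|^2$ in $M^{1,2}$ and a careful Adams/dyadic decomposition of the Riesz potential $I_1$ against the zero-trace Green function are essential to close the iteration, whereas the remaining boundary corrections from the Lipschitz data $\phi$ and the straightening diffeomorphism only contribute the admissible linear defect $C_1 r_1$.
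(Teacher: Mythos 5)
Your overall scaffolding (harmonic replacement near the flat boundary, separation of the Lipschitz boundary datum, Riesz--potential control of the zero--trace correction, H\"older treatment of the spinor term with $\psi\in L^{q}$) is in the right spirit, and your handling of the $\phi$--contribution and the spinor term would indeed produce the admissible errors $C_1r_1$ and $r_1^{1-2m/q}$. The difficulty you yourself flag as ``the main obstacle'' is, however, a genuine gap, not merely a technical one.

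The problem is the claimed interpolation
\[
\rho^{1-m}\int_{B_\rho} I_1\bigl(|\nabla u|^2\bigr)\ \le\ C\,\varepsilon_1\,M(x_1,r_1).
\]
Adams' theorem and the Fubini/dyadic computation you allude to do give $I_1:M^{1,2}\to M^{2,2}\hookrightarrow M^{1,1}$, but the norm estimate they produce is
\[
\rho^{1-m}\int_{B_\rho} I_1\bigl(|\nabla u|^2\bigr)\ \le\ C\,\bigl\||\nabla u|^2\bigr\|_{M^{1,2}}\ \le\ C\,\varepsilon_1^2,
\]
and this cannot be improved to $C\varepsilon_1 M(x_1,r_1)$ by any structure--blind Riesz potential argument: there is no way to trade one of the two factors of the pointwise product $|\nabla u|\cdot|\nabla u|$ for the $M^{1,1}$ seminorm without some cancellation. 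The iterate $M(x_1,\theta_1 r_1)\le \tfrac14 M(x_1,r_1)+C\varepsilon_1^2+C_1 r_1+r_1^{1-2m/q}$ does \emph{not} close --- the $\varepsilon_1^2$ term is scale--independent, so $M(x_1,r)$ saturates at a constant of size $\varepsilon_1^2$ rather than decaying like $r^{\alpha_0}$, and no H\"older continuity follows. The factor $M(x_1,r_1)$ in front of the smallness parameter is exactly what makes the Morrey iteration produce a power decay, and it enters in the paper only through the $\mathcal{H}^1$--BMO duality of Fefferman--Stein applied to a div--curl (Jacobian) term: after passing to H\'elein's adapted frame $\{\bar e_l\}$ and performing the Hodge decomposition $\bar e_l\wedge d(\eta u_\phi)=d\alpha_l+d^*\beta_l$, the leading contribution becomes a wedge product $\{D_{m,l},\bar e_m\bar v_l\}$ paired against $u$, whence
\[
|I|\ \le\ C\,\|\nabla D_{m,l}\|_{L^2}\,\|\nabla(\bar e_m\bar v_l)\|_{L^2}\,\|u\|_{BMO}\ \le\ C\,(\varepsilon_2+\sqrt{r_2})\,\rho^{m-1}\,M(0,\rho).
\]
Here $\|u\|_{BMO}\lesssim M(0,\rho)$ via Poincar\'e, and the Coifman--Lions--Meyer--Semmes compensated compactness is what puts the Jacobian in $\mathcal H^1$. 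This is not a form of Adams' theorem, and if your Riesz--potential argument \emph{did} deliver $\varepsilon_1 M(x_1,r_1)$ for a general quadratic right--hand side $|\nabla u|^2$, it would prove H\"older continuity of arbitrary systems $\Delta u=Q(u,\nabla u)$ with quadratic growth, which is false. So the decomposition $u=v+w$ alone is insufficient: you must replace the naive splitting by the moving--frame/Hodge splitting and invoke the $\mathcal H^1$--BMO pairing, as in the paper's Lemma~\ref{boundary M}, before the case analysis on interior versus boundary balls goes through.
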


To prove the Lemma \ref{M}, let us recall the adapted frame constructions due to H{\'e}lein. Following \cite{heleinregularite}, we always assume that there exists a global orthonormal tangent frame $\{e_l\}_{l=1}^n$ adapted to $u\in H^1(\Omega, N)$ (i.e. $e_l(x)\in T_{u(x)}N$ for a.e. $x\in \Omega$, $1\leq l\leq n$), which satisfies
\begin{equation}
\sum_l\int_\Omega|De_l|^2\leq C_3\int_{\Omega}|Du|^2,
\end{equation}
\begin{equation}
{\dive} \langle  De_i,e_j\rangle=0, \ \text{for } \  1\leq i,j\leq n,
\end{equation}
\begin{equation}
{\dive} \langle  e_i,\frac{\pt e_j}{\pt\nu}\rangle=0, \ \text{for } \  1\leq i,j\leq n,
\end{equation}
where $\nu$ is the outward unit normal of $\pt \Omega$. For simplicity, we assume that $x_0=0$ and $\Omega=B_1^+$. For $r\in(0,1] $, denote $B_r^+(0)=\{x=(x',x_m)\in B_r(0): x_m\geq0 \}$, $T_r=\{x=(x',x_m)\in B_r^+(0): x_m=0\}$.
 Let $\{e\}_{l=1}^n: B_1^+\to \mathbb{R}^K$ be obtained as above, we extend it to $B$ as follows.
 \begin{equation}
 \bar{e}_l(x',x_m)=\left\{
 \begin{aligned}
		&e_l(x',x_m), \  x_m\geq0,\\
		&e_l(x',-x_m), \ x_m\leq 0.
\end{aligned}
\right.
 \end{equation}
 Then we have
 \begin{equation}
 \sum_{l}\int_{B_1}|D\bar{e}_l|^2\leq C_4\int_{B_1^+}|Du|^2,
 \end{equation}
  \begin{equation}
{\dive} \langle  D\bar{e}_i,\bar{e}_j\rangle=0, \ \text{for } \  1\leq i,j\leq n,
 \end{equation}
 Now, we extend $\phi$ to $B_1^+$ by letting $\phi(x',x_m)=\phi(x')$, and extend $u-\phi$ from $B_1^+$ to $B_1$ oddly with respect to $x_m$,
 \begin{equation}
 u_{\phi}(x',x_m)=\left\{
 \begin{aligned}
&(u-\phi)(x',x_m), \ x_m\geq0,\\
&-(u-\phi)(x',-x_m), \ x_m\leq 0.
\end{aligned}
\right.
 \end{equation}
 
 Now we are ready to prove the following lemma.
 
 \begin{lem}\label{boundary M}
 Under the same assumptions as in Theorem \ref{holder},  
 there exist $\varepsilon_2>0$, $\theta_2\in(0,1/2)$, and $\delta_2\in (0,\rho_0)$ such that for $r_2\in(0,\delta_2)$, if 
\begin{equation}
    r_2^{2-m}\int_{B_{r_2}^+(0)}|\nabla u|^2\leq \varepsilon_2^2,
\end{equation}
then for $x\in T_{r_2/2}$, $\rho_1\in (0,r_2/2)$,
\begin{equation}\label{bdy}
(\theta_2 \rho_1)^{1-m}\int_{B^+_{\theta_2 \rho_1}(x)}|Du|\leq \frac{1}{6^m} M(x,\rho_1)+Cr_2+ \frac{1}{6^{m+1}}r_2^{1-\frac{2m}{q}}.
\end{equation}
 \end{lem}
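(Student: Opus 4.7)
The plan is to adapt Wang's boundary comparison scheme from \cite{wang1999boundary} to the coupled Dirac-harmonic setting, treating the spinor-curvature term $\mathcal R(u,\psi)$ as a controllable Morrey-scale perturbation. The enabling tools are already in place: the H\'elein adapted tangent frame $\bar{e}_l$ and the odd extension $u_\phi$, together with the Coulomb-boundary condition $\dive\langle e_i,\pt e_j/\pt\nu\rangle=0$ that makes the frame PDE legitimate weakly across $T_1$.

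Fix $x\in T_{r_2/2}$ and $\rho_1\in(0,r_2/2)$, so that $B_{\rho_1}(x)\subset B_{r_2}$ is a full ball in the extended picture. I would set $v_l:=\langle Du_\phi,\bar{e}_l\rangle$, and using \eqref{map eq}, the identity $\langle II(Du,Du),e_l\rangle=0$, and the Coulomb conditions, derive on $B_{\rho_1}(x)$ a weak divergence identity of the schematic form
\begin{equation*}
\dive v_l \;=\; -\langle Du_\phi, D\bar{e}_l\rangle \;+\; \langle F,\bar{e}_l\rangle \;+\; G^\phi_l,
\end{equation*}
where $F:=Re(P(\mathcal{S}(du,e_\beta\cdot\psi);\psi))$ is the spinor source and $G^\phi_l$ collects boundary terms involving $D\phi$. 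I would then split $v_l=h_l+w_l$ with $h_l$ harmonic on $B_{\rho_1}(x)$ matching the trace of $v_l$ on $\partial B_{\rho_1}(x)$, and $w_l$ solving the corresponding Poisson equation.

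For the harmonic piece, the gradient mean-value inequality gives $(\theta\rho_1)^{1-m}\int_{B_{\theta\rho_1}(x)}|h_l|\leq C_m\theta\, M(x,\rho_1)$; picking $\theta_2$ small enough, and noting $|Du|\lesssim \sum_l|v_l|+|D\phi|$, produces the leading $\frac{1}{6^m}M(x,\rho_1)$ term. For the error $w_l$, standard Riesz-potential/Morrey estimates process the three source terms in turn: (i) $|Du||D\bar{e}_l|$ in $L^1$ is bounded via H\'elein's frame estimate $\int|D\bar{e}_l|^2\leq C\int|Du|^2$ together with the hypothesis $\int|Du|^2\leq\varepsilon_2^2 r_2^{m-2}$, giving a factor proportional to $\varepsilon_2$ that is absorbable into the leading constant once $\varepsilon_2$ is chosen small; (ii) $F$ is bounded pointwise by $|Du||\psi|^2$ and therefore lies in $L^p$ with $\frac1p=\frac12+\frac2q$, and the Morrey-Riesz bound converts this into the $\frac{1}{6^{m+1}}r_2^{1-2m/q}$ term; (iii) the piece involving $D\phi\in L^\infty$ yields $C r_2$.

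The main obstacle is the simultaneous calibration of these ingredients. Three scales must cooperate: the harmonic decay rate $\theta_2^m$, the Hardy-space/Wente absorption of the Jacobian term $\langle Du_\phi,D\bar{e}_l\rangle$ (requiring $\varepsilon_2$ small against a H\'elein constant), and the Morrey-Riesz exponent on the spinor source, which must land exactly on $r_2^{1-2m/q}$ and hence requires H\"older interpolation of $\psi\in L^q$ against the monotonicity hypothesis on $|Du|^2$ in the borderline $M^{1,1}$ scale. A secondary subtlety is that the extensions $\bar{e}_l$ and $u_\phi$ are only in $H^1$ near $T_1$; verifying that the frame identity above holds distributionally across $T_1$ requires testing against functions that do not vanish on $T_1$, and the Coulomb-boundary condition $\dive\langle e_i,\pt e_j/\pt\nu\rangle=0$ is precisely what guarantees that no spurious boundary distribution appears.
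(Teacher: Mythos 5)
Your high-level plan (frame $\bar e_l$, odd extension $u_\phi$, harmonic-plus-error split, small-constant absorption, separate Morrey treatment of the spinor source) is in the right spirit, but the central analytic step is not correctly set up, and the way you propose to estimate the error term would not go through.

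The object you want to control is the vector field $v_l=\langle Du_\phi,\bar e_l\rangle$. You derive only a \emph{divergence} identity $\dive v_l = -\langle Du_\phi, D\bar e_l\rangle + \langle F,\bar e_l\rangle + G^\phi_l$, and then propose to split $v_l=h_l+w_l$ with $h_l$ harmonic matching the trace and $w_l$ ``solving the corresponding Poisson equation.'' But a divergence equation alone does not determine a Laplace equation for $v_l$: one has $\Delta v_l = \nabla(\dive v_l) + (\text{curl part})$, and $\mathrm{curl}\,v_l = \{\bar e_l, u_\phi\}$ is a genuine Jacobian that must be estimated separately. This is exactly what the paper's Hodge decomposition $\bar e_l\wedge d(\eta u_\phi)=d\alpha_l+d^*\beta_l$ is for: the $\beta_l$ piece captures the curl/Jacobian and is estimated in $L^2$ via the Hardy--$\mathrm{BMO}$ duality $\|\nabla\beta_l\|_{L^2}\le C\|\eta u_\phi\|_{\mathrm{BMO}}\|\nabla u\|_{L^2}$, while $\alpha_l$ solves $\Delta\alpha_l=\dive\langle Du_\phi,\bar e_l\rangle$. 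Your proposal never mentions the curl of $v_l$, so the split is underdetermined.

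The second gap is in your treatment of the div source $\langle Du_\phi,D\bar e_l\rangle$. You call it ``the Jacobian term'' and invoke ``Hardy-space/Wente absorption,'' but it is not a Jacobian: it is the full contraction $\sum_i \partial_i u_\phi\cdot\partial_i\bar e_l$, which is merely $L^1$. A Riesz potential of an $L^1$ right-hand side lands only in weak-$L^{m/(m-1)}$, so ``standard Riesz-potential/Morrey estimates'' do \emph{not} yield $\tilde\rho^{1-m}\int|Dw_l|\le C\varepsilon_2 M(0,\rho)$ with a small, absorbable constant. The paper gets around this precisely by exploiting the Coulomb gauge $\dive\langle D\bar e_l,\bar e_m\rangle=0$: writing $\langle D\bar e_l,\bar e_m\rangle = d^*D_{m,l}$, it converts $\int\langle Du,D\bar e_l\rangle\bar v_l$ into the genuine Jacobian pairing $\int\{D_{m,l},\bar e_m\bar v_l\}\,u$, which is then bounded by $\|\nabla D_{m,l}\|_{L^2}\|\nabla(\bar e_m\bar v_l)\|_{L^2}\|u\|_{\mathrm{BMO}}$, producing exactly the factor $(\varepsilon_2+\sqrt{r_2})M(0,\rho)$ that can be absorbed. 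In addition, the whole estimate for the inhomogeneous piece is run by duality against a bounded $v_l$ with $\Delta v_l=\dive(D\alpha_l^2/|D\alpha_l^2|)$, rather than by a direct potential estimate. Without this gauge-plus-duality conversion, the $L^1$ source cannot be made small in the Morrey scale, and the leading absorption fails. Your spinor-term estimate (iii) and the $D\phi$ term (ii) are essentially correct and match the paper's $III$ and $II$, but (i) is the heart of the lemma and is the part that, as written, would not close.
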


\begin{proof}
Let us first prove \eqref{bdy} for $x=0$. For $\rho\in(0,r_2)$, let $\eta\in C_0^\infty(B_\rho)$ be such that $\eta=1$ in $B_{\rho/2}$, $\eta=0$ outside $B_\rho$, and $|D\eta|\leq2/\rho$. It follows from the HDR theorem in \cite{bethuel1993singular} that there exist $\alpha_l\in H^1(\mathbb{R}^m,\mathbb{R}^K)$ and $\beta_l\in H^1(\mathbb{R}^m,\wedge^2\mathbb{R}^K)$ such that
\begin{equation*}
d^*\alpha_l=d\beta_l=0,
\end{equation*}
\begin{equation}\label{eq u phi}
\bar{e}_l\wedge d(\eta u_\phi)=d\alpha_l+d^*\beta_l
\end{equation}
and 
\begin{equation}
\|\alpha_l\|_{H^1}+\|\beta_l\|_{H^1}\leq C_6\|D(\eta u_\phi)\|_{L^2}\leq C_7(\|Du\|_{L^2(B_\rho^+)}+\|D\phi\|_{L^2(B_\rho^+)}),
\end{equation}
where we have used the Poincar{\'e} inequality. It follows from \eqref{eq u phi} that we can  control the left-hand side of \eqref{bdy} by  suitable estimates on $\alpha_l$ and $\beta_l$. To do so, we differentiate \eqref{eq u phi} and get
\begin{equation}\label{beta}
\Delta \beta_l^{ij}=\{\bar{e}_l,\eta u_\phi\}_{ij}=(\bar{e}_l)_i(\eta u_\phi)_j-(\bar{e}_l)_j(\eta u_\phi)_i,
\end{equation}
where $\beta_l^{ij}$ are the components of $\beta_l$ and 
\begin{equation}\label{alpha}
\Delta \alpha_l=\dive\langle Du_\phi, \bar{e}_l\rangle, \ \text{on} \ B_{\rho/2}.
\end{equation}

Multiplying \eqref{beta} by $\beta_l^{ij}$ and integrating on $\mathbb{R}^m$, we get
\begin{equation}
\|\nabla \beta_l\|_{L^2(\mathbb{R}^m)}\leq C\|\eta u_\phi\|_{BMO}\|\nabla u\|_{L^2(B^+_\rho)}.
\end{equation}
Moreover, 
\begin{equation}
\|\eta u_{\phi}\|_{BMO}\leq CM(0,\rho)\leq CM(0,r_2).
\end{equation}
By \eqref{monotonicity}, we have
\begin{equation}
\rho^{2-m}\int_{B_\rho}|\nabla \beta|^2\leq CM^2(0,\rho)(\varepsilon_2^2+r_2).
\end{equation}
Therefore, 
\begin{equation}\label{Dbeta}
\rho^{1-m}\int_{B_\rho}|\nabla \beta|\leq CM(0,\rho)(\varepsilon_2+\sqrt{r_2}).
\end{equation}

Based on \eqref{alpha}, we have the following equation for $\alpha_l$:
\begin{equation}\label{alpha2}
\dive(D\alpha_l-\langle \omega_\phi,\bar{e}_l\rangle)=\langle Du_\phi, D\bar{e}_l\rangle-\langle \omega_\phi, D\bar{e}_l\rangle
+\frac12R^l_{kij}(\bar{u})\langle \bar{\psi}^i, U^k\cdot\bar{\psi}^j\rangle,
\end{equation}
where
\begin{equation}\label{omega phi}
\omega_\phi=\left\{
 \begin{aligned}
-D\phi(x',x_m), \ x_m\geq0,\\
D\phi(x',-x_m), \ x_m< 0,
\end{aligned}
\right.
\end{equation}
\begin{equation}
\bar{u}=\left\{
 \begin{aligned}
&u(x',x_m), \ x_m\geq0,\\
&u(x',-x_m), \ x_m< 0.
\end{aligned}
\right.
\end{equation}
\begin{equation}
U^k=\left\{
 \begin{aligned}
&\nabla u^k(x',x_m), \ x_m\geq0,\\
&-\nabla u^k(x',-x_m), \ x_m< 0,
\end{aligned}
\right.
\end{equation}
and 
\begin{equation}\label{bar psi}
\bar{\psi}^i=\left\{
 \begin{aligned}
&\psi^i(x',x_m), \ x_m\geq0,\\
&\psi^i(x',-x_m), \ x_m< 0.
\end{aligned}
\right.
\end{equation}

Since all the functions can be decomposed into even and odd parts, the proof of \eqref{alpha2} can be devided into two cases as follows.

\textit{Case 1.} \ Suppose $\varphi\in C^\infty_0(B_{\rho/2})$ is even, i.e. $\varphi(x',-x_m)=\varphi(x',x_m)$. By \eqref{alpha}, we have
\begin{equation}
\int_{B_{\rho/2}}\langle D\alpha_l, D\varphi\rangle
=\int_{B_{\rho/2}}\langle Du_\phi, \bar{e}_l\rangle D\varphi=0.
\end{equation}
On the other hand, it follows from \eqref{omega phi}-\eqref{bar psi} that
\begin{equation}
\begin{split}
&\int_{B_{\rho/2}}\langle Du_\phi, D\bar{e}_l\rangle \varphi=\int_{B_{\rho/2}}\langle \bar{e}_l, \omega_\phi\rangle D\varphi\\
&=\int_{B_{\rho/2}}\langle D\bar{e}_l, \omega_\phi\rangle \varphi=\int_{B_{\rho/2}}R^l_{kij}(\bar{u})\langle \bar{\psi}^i, U^k\cdot\bar{\psi}^j\rangle\varphi=0.
\end{split}
\end{equation}
This implies \eqref{alpha2}.

\textit{Case 2.} \ Suppose $\varphi\in C^\infty_0(B_{\rho/2})$ is odd, i.e. $\varphi(x',-x_m)=-\varphi(x',x_m)$ for $x_m<0$. By \eqref{alpha}, we have
\begin{equation}
\int_{B_{\rho/2}}\langle D\alpha_l, D\varphi\rangle=2\int_{B^+_{\rho/2}}\langle Du_\phi, \bar{e}_l\rangle D\varphi.
\end{equation}

For $\varepsilon>0$, we define
\begin{equation}\label{eta epsilon}
\eta_{\varepsilon}(x',x_m)=\left\{
 \begin{aligned}
1, \ |x_m|\geq2\varepsilon,\\
0, \ \  |x_m|<\varepsilon,
\end{aligned}
\right.
\end{equation}and $|D\eta_{\varepsilon}|\leq \frac2\varepsilon$.
Then  
\begin{equation}
 \begin{aligned}
\int_{B^+_{\rho/2}}\langle Du_\phi, \bar{e}_l\rangle D\varphi&=\lim_{\varepsilon\to 0}\int_{B^+_{\rho/2}}\langle Du_\phi, \bar{e}_l\rangle D\varphi\eta_\varepsilon\\
&=\lim_{\varepsilon\to 0}\int_{B^+_{\rho/2}}\langle Du_\phi, \bar{e}_l\rangle (D(\varphi\eta_\varepsilon)-D\eta_{\varepsilon}\varphi).
\end{aligned}
\end{equation}
 Since $\varphi(x',0)=0$, we have
\begin{equation}\label{deta}
 \begin{aligned}
&\lim_{\varepsilon\to 0}\left|\int_{B^+_{\rho/2}}\langle Du_\phi, \bar{e}_l\rangle D\eta_{\varepsilon}\varphi\right|\\
&\leq\lim_{\varepsilon\to 0}\frac{2}{\varepsilon}\int_{B^+_{\rho/2}\cap \{x: \varepsilon\leq x_m\leq2\varepsilon\}}|Du_\phi||D\varphi||x_m|\\
&\leq\lim_{\varepsilon\to 0} 2\int_{B^+_{\rho/2}\cap \{x: \varepsilon\leq x_m\leq2\varepsilon\}}|Du_\phi||D\varphi|\\
&= 0.
\end{aligned}
\end{equation}
Hence, we get
\begin{equation}
 \begin{aligned}
\int_{B^+_{\rho/2}}\langle Du_\phi, \bar{e}_l\rangle D\varphi
&=\lim_{\varepsilon\to 0}\int_{B^+_{\rho/2}}\langle Du_\phi, \bar{e}_l\rangle D(\varphi\eta_\varepsilon)\\
&=\lim_{\varepsilon\to 0}\int_{B^+_{\rho/2}}\langle Du-D\phi, \bar{e}_l\rangle D(\varphi\eta_\varepsilon).
\end{aligned}
\end{equation}
It follows from the equation of \eqref{geldh1} that
\begin{equation}
 \begin{aligned}
&\lim_{\varepsilon\to 0}\int_{B^+_{\rho/2}}\langle Du, \bar{e}_l\rangle D(\varphi\eta_\varepsilon)\\
&=-\lim_{\varepsilon\to 0}\int_{B^+_{\rho/2}}\dive\langle Du, \bar{e}_l\rangle \eta_\varepsilon\varphi\\
&=-\lim_{\varepsilon\to 0}\int_{B^+_{\rho/2}}\langle Du, D\bar{e}_l\rangle \eta_\varepsilon\varphi-\frac12\lim_{\varepsilon\to 0}\int_{B^+_{\rho/2}}R^l_{kij}\langle\psi^i,\nabla u^k\cdot\psi^j\rangle\eta_\varepsilon\varphi.
\end{aligned}
\end{equation}
On the other hand, similar to  \eqref{deta}, we get
\begin{equation}
\lim_{\varepsilon\to 0}\int_{B^+_{\rho/2}}\langle D\phi, \bar{e}_l\rangle D\eta_\varepsilon\varphi=0.
\end{equation}
Therefore, we obtain
\begin{equation}
 \begin{aligned}
\int_{B_{\rho/2}}\langle Du_\phi, \bar{e}_l\rangle D\varphi
=&-2\int_{B^+_{\rho/2}}\langle Du, D\bar{e}_l\rangle\varphi-2\int_{B^+_{\rho/2}}\langle D\phi, \bar{e}_l\rangle D\varphi\\
&-\int_{B^+_{\rho/2}}R^l_{kij}\langle\psi^i,\nabla u^k\cdot\psi^j\rangle\varphi\\
=&-2\int_{B^+_{\rho/2}}\langle Du_\phi, D\bar{e}_l\rangle\varphi-2\int_{B^+_{\rho/2}}\langle D\phi, D\bar{e}_l\rangle\varphi\\
&-2\int_{B^+_{\rho/2}}\langle D\phi, \bar{e}_l\rangle D\varphi
-\int_{B^+_{\rho/2}}R^l_{kij}\langle\psi^i,\nabla u^k\cdot\psi^j\rangle\varphi\\
=&-\int_{B_{\rho/2}}\langle Du_\phi, D\bar{e}_l\rangle\varphi+\int_{B_{\rho/2}}\langle \omega_\phi, D\bar{e}_l\rangle\varphi\\
&+\int_{B_{\rho/2}}\langle \omega_\phi, \bar{e}_l\rangle D\varphi
-\frac12\int_{B_{\rho/2}}R^l_{kij}(\bar{u})\langle \bar{\psi}^i, U^k\cdot\bar{\psi}^j\rangle\varphi\\
=&-\int_{B_{\rho/2}}\langle Du_\phi, D\bar{e}_l\rangle\varphi+\int_{B_{\rho/2}}\langle \omega_\phi, D\bar{e}_l\rangle\varphi\\
&-\int_{B_{\rho/2}}\dive\langle \omega_\phi, \bar{e}_l\rangle \varphi
-\frac12\int_{B_{\rho/2}}R^l_{kij}(\bar{u})\langle \bar{\psi}^i, U^k\cdot\bar{\psi}^j\rangle\varphi.
\end{aligned}
\end{equation}
This implies
\begin{equation}
 \begin{aligned}
\int_{B_{\rho/2}}\dive(D\alpha_l)\varphi
=&\int_{B_{\rho/2}}\langle Du_\phi, D\bar{e}_l\rangle\varphi-\int_{B_{\rho/2}}\langle \omega_\phi, D\bar{e}_l\rangle\varphi\\
&+\int_{B_{\rho/2}}\dive\langle \omega_\phi, \bar{e}_l\rangle \varphi
+\frac12\int_{B_{\rho/2}}R^l_{kij}(\bar{u})\langle \bar{\psi}^i, U^k\cdot\bar{\psi}^j\rangle\varphi,
\end{aligned}
\end{equation}
and  \eqref{alpha2} follows.

Now, we are ready to give estimates on $\alpha_l$. Let $\alpha_l=\alpha_l^1+\alpha_l^2$, where
\begin{equation}
\left\{
 \begin{aligned}
 &\Delta\alpha_l^1=0, \ \text{in} \ B_{\rho/2}\\
  &\alpha_l^1=\alpha_l, \ \text{on} \ \pt B_{\rho/2}
\end{aligned}
\right.
\end{equation}
and
\begin{equation}
\left\{
 \begin{aligned}
\dive(D\alpha_l^2-\langle \omega_\phi,\bar{e}_l\rangle)=&\langle Du_\phi, D\bar{e}_l\rangle-\langle \omega_\phi, D\bar{e}_l\rangle\\
&+\frac12R^l_{kij}(\bar{u})\langle \bar{\psi}^i, U^k\cdot\bar{\psi}^j\rangle, \ \text{in} \ B_{\rho/2}\\
\alpha_l^2=0, \ \ \text{on} \ \pt B_{\rho/2}.
\end{aligned}
\right.
\end{equation}
Let $\tilde\rho=\rho/2$. Since $\alpha_l^1$ is harmonic, by the mean-value inequality, we have for $\forall \theta_0\in(0,1)$
\begin{equation}\label{dalpha1}
 \begin{aligned}
&(\theta_0\tilde\rho)^{1-m}\int_{B_{\theta_0\tilde\rho}}|D\alpha_l^1|\\
&\leq\theta_0\tilde\rho^{1-m}\int_{B_{\tilde\rho}}|D\alpha_l^1|\\
&\leq\theta_0\tilde\rho^{1-m}\int_{B_{\tilde\rho}}|Du|+|D\phi|+|D\beta_l|+|D\alpha_l^2|\\
&\leq C\theta_0 M(0,\tilde\rho)+C\theta_0\tilde\rho+C\theta_0 M(0,\rho)(\varepsilon_2+\sqrt{r_2})+\theta_0\tilde\rho^{1-m}\int_{B_{\tilde\rho}}|D\alpha_l^2|\\
&\leq C\theta_0 r_2+C\theta_0 M(0,\rho)+\theta_0\tilde\rho^{1-m}\int_{B_{\tilde\rho}}|D\alpha_l^2|.
\end{aligned}
\end{equation}
Next, we derive the estimate for $\tilde\rho^{1-m}\int_{B_{\tilde\rho}}|D\alpha_l^2|$. To do so, we define 
\begin{equation}
\left\{
 \begin{aligned}
&\Delta v_l=\dive\left(\frac{D\alpha_l^2}{|D\alpha_l^2|}\right), \ \text{in} \ B_{\tilde\rho}\\
&v_l=0, \ \text{on} \ \pt B_{\tilde\rho}.
\end{aligned}
\right.
\end{equation}
So we have
\begin{equation}
\|Dv_l\|_{L^2(B_{\tilde\rho})}\leq C\tilde\rho^{\frac{m}{2}},  \  \|v_l\|_{L^\infty(B_{\tilde\rho})}\leq C\tilde\rho.
\end{equation}
Since $\alpha_l^2=v_l=0$ on $\pt B_{\tilde\rho}$, we have
\begin{equation}
 \begin{aligned}
&\int_{B_{\tilde\rho}}|D\alpha_l^2|\\
=&-\int_{B_{\tilde\rho}}v_l\Delta\alpha_l^2\\
=&-\int_{B_{\tilde\rho}}v_l\Bigg(\langle Du_\phi, D\bar{e}_l\rangle-\langle \omega_\phi, D\bar{e}_l\rangle+\dive\langle \omega_\phi, \bar{e}_l\rangle \\
&\quad\quad\quad\quad\quad\quad+\frac12R^l_{kij}(\bar{u})\langle \bar{\psi}^i, U^k\cdot\bar{\psi}^j\rangle\Bigg)\\
=&-\int_{B_{\tilde\rho}}\langle Du_\phi, D\bar{e}_l\rangle v_l+\int_{B_{\tilde\rho}}\langle  \omega_\phi, D\bar{e}_l\rangle v_l+\int_{B_{\tilde\rho}}\langle  \omega_\phi , \bar{e}_l\rangle Dv_l\\
&-\frac12\int_{B_{\tilde\rho}}R^l_{kij}(\bar{u})\langle \bar{\psi}^i, U^k\cdot\bar{\psi}^j\rangle v_l.
\end{aligned}
\end{equation}
Note that
\begin{equation}
 \begin{aligned}
&-\int_{B_{\tilde\rho}}\langle Du_\phi, D\bar{e}_l\rangle v_l\\
&=-\int_{B_{\tilde\rho}}\langle \omega_\phi, D\bar{e}_l\rangle v_l-\left(\int_{B^+_{\tilde\rho}}\langle Du, D\bar{e}_l\rangle v_l-\int_{B^-_{\tilde\rho}}\langle D\bar{u}, D\bar{e}_l\rangle v_l\right),
\end{aligned}
\end{equation}
we have
\begin{equation}
 \begin{aligned}
&\int_{B_{\tilde\rho}}|D\alpha_l^2|\\
=&-\left(\int_{B^+_{\tilde\rho}}\langle Du, D\bar{e}_l\rangle v_l-\int_{B^-_{\tilde\rho}}\langle D\bar{u}, D\bar{e}_l\rangle v_l\right)+\int_{B_{\tilde\rho}}\langle  \omega_\phi , \bar{e}_l\rangle Dv_l\\
&-\frac12\int_{B_{\tilde\rho}}R^l_{kij}(\bar{u})\langle \bar{\psi}^i, U^k\cdot\bar{\psi}^j\rangle v_l\\
=:&I+II+III.
\end{aligned}
\end{equation}

We estimate $I,II,III$ as follows.
\begin{equation}\label{II}
|II|\leq C {\rm Lip}(\phi)\|Dv_l\|_{L^2(B_{\tilde\rho})}\tilde\rho^{\frac{m}{2}}\leq C\tilde\rho^m.
\end{equation}
\begin{equation}\label{III}
 \begin{aligned}
|III|&\leq C\|v_l\|_{L^\infty(B_{\tilde\rho})}\int_{B^+_{\tilde\rho}}|\psi|^2|\nabla u|\\
&\leq C\tilde\rho^{m-1}\left(\tilde\rho^{2-m}\int_{B^+_{\tilde\rho}}|\nabla u|^2\right)^{\frac12}\left(\tilde\rho^{2-m}\int_{B^+_{\tilde\rho}}|\psi|^4\right)^{\frac12}\\
&\leq C\tilde\rho^{m-\frac{2m}{q}}(\varepsilon_2+\sqrt{r_2})\|\psi\|^2_{L^q}.
\end{aligned}
\end{equation}
\begin{equation}
 \begin{aligned}
I&=-\left(\int_{B^+_{\tilde\rho}}\langle Du, D\bar{e}_l\rangle v_l-\int_{B^-_{\tilde\rho}}\langle D\bar{u}, D\bar{e}_l\rangle v_l\right)\\
&=-\int_{B^+_{\tilde\rho}}\langle Du, D\bar{e}_l\rangle \bar{v}_l,
\end{aligned}
\end{equation}
where $\bar{v}_l(x',x_m)=v_l(x',x_m)-v_l(x',-x_m)$. Since $\bar{v}_l=0$ on $\pt B^+_{\tilde\rho}$, we get
\begin{equation}
 \begin{aligned}
I&=-\int_{B^+_{\tilde\rho}}\langle Du, D\bar{e}_l\rangle \bar{v}_l\\
&=\int_{B^+_{\tilde\rho}}(-1)^m\sum_{i<j}\{D_{m,l}^{ij},u\}_{ij}\bar{e}_m\bar{v}_l\\
&=(-1)^{m+1}\int_{B^+_{\tilde\rho}}\sum_{i<j}\{D_{m,l}^{ij},\bar{e}_m\bar{v}_l\}_{ij}u,
\end{aligned}
\end{equation}
where $D_{m,l}$ is a $2$-form in $H^1(B_1\subset\mathbb{R}^m,\mathbb{R}^K)$ such that
\begin{equation}
d^*D_{m,l}=\langle D\bar{e}_l,\bar{e}_m\rangle
\end{equation}
and
\begin{equation}
\int_{B^+_{\tilde\rho}}|\nabla D_{m,l}|^2\leq C\int_{B^+_{\tilde\rho}}|\nabla u|^2\leq C(\varepsilon^2_2+r_2)\tilde\rho^{m-2}.
\end{equation}
Hence,
\begin{equation}\label{I}
 \begin{aligned}
|I|&\leq C\|\nabla D_{m,l}\|_{L^2(B^+_{\tilde\rho})}\|\nabla \bar{e}_m\bar{v}_l\|_{L^2(B^+_{\tilde\rho})}\|u\|_{BMO}\\
&\leq C\tilde\rho^{m-1}(\varepsilon_2+\sqrt{r_2})M(0,\rho).
\end{aligned}
\end{equation}
Combining \eqref{I}, \eqref{II} and \eqref{III}, we get
\begin{equation}\label{Dalpha2}
\tilde\rho^{1-m}\int_{B_{\tilde\rho}}|D\alpha_l^2|\leq C\Bigg(r_2+(\varepsilon_2+\sqrt{r_2})M(0,\rho)+r_2^{1-\frac{2m}{q}}(\varepsilon_2+\sqrt{r_2})\Bigg).
\end{equation}
Plugging this into \eqref{dalpha1}, we have
\begin{equation}\label{Dalpha1}
 \begin{aligned}
&(\theta_0\tilde\rho)^{1-m}\int_{B_{\theta_0\tilde\rho}}|D\alpha_l^1|\\
&\leq C\theta_0 r_2+C\theta_0M(0,r_2)+\theta_0\tilde\rho^{1-m}\int_{B_{\tilde\rho}}|D\alpha_l^2|\\
&\leq C\theta_0r_2+C\theta_0(1+\varepsilon_2+\sqrt{r_2})M(0,\rho)+C\theta_0r_2^{1-\frac{2m}{q}}(\varepsilon_2+\sqrt{r_2})\\
&\leq C\theta_0 r_2+C\theta_0 M(0,\rho)+C\theta_0(\varepsilon_2+\sqrt{r_2})r_2^{1-\frac{2m}{q}}.
\end{aligned}
\end{equation}
Putting \eqref{Dbeta}, \eqref{Dalpha1} and \eqref{Dalpha2} together, we obtain
\begin{equation}\label{Du}
 \begin{aligned}
&(\theta_0\tilde\rho)^{1-m}\int_{B^+_{\theta_0\tilde\rho}}|Du|\\
&\leq (\theta_0\tilde\rho)^{1-m}\int_{B_{\theta_0\tilde\rho}}|D\alpha_l|+|D\beta|+|D\phi|\\
&\leq C\theta_0 r_2+C\theta_0 M(0,\rho)+C\theta_0(\varepsilon_2+\sqrt{r_2})r_2^{1-\frac{2m}{q}}\\
&\quad+C\theta_0^{1-m}\Bigg(r_2+(\varepsilon_2+\sqrt{r_2})M(0,\rho)+r_2^{1-\frac{2m}{q}}(\varepsilon_2+\sqrt{r_2})\Bigg)\\
&\quad+C\theta_0^{1-m}(\varepsilon_2+\sqrt{r_2})M(0,\rho)\\
&\leq C(\theta_0^{1-m}(\varepsilon_2+\sqrt{r_2})+\theta_0)M(0,\rho)+C\theta_0^{1-m}r_2\\
&\quad+C\theta_0^{1-m}(\varepsilon_2+\sqrt{r_2})r_2^{1-\frac{2m}{q}}.
\end{aligned}
\end{equation}

Now, we can choose $\theta_0$, $\varepsilon_2$ and $\delta_2$ sufficient small such that
\begin{equation}
 \begin{aligned}
C\theta_0&\leq \frac{1}{6^{m+1}},\\
C\theta_0^{1-m}( 2^{\frac{m}{2}-1}\varepsilon_2+\sqrt{\delta_2})&\leq\frac{1}{6^{m+1}}.
\end{aligned}
\end{equation}
Then, for $\theta_2=\frac12\theta_0$ and $\rho\in(0,r_2)$, we get
\begin{equation}\label{0}
(\theta_2\rho)^{1-m}\int_{B^+_{\theta_2\rho}}|Du|
\leq \frac{1}{6^m} M(0,\rho)+Cr_2+ \frac{1}{6^{m+1}}r_2^{1-\frac{2m}{q}},
\end{equation}
and \eqref{bdy} follows from this and
\begin{equation}
(r_2/2)^{2-m}\int_{B^+_{r_2/2}(x)}\leq 2^{m-2}\varepsilon_2^2,\ \forall x\in T_{r_2/2}
\end{equation} 
\end{proof}

\begin{proof}[\bf Proof of Lemma \ref{M}]
 We proceed as follows.

\textit{Case 1.} $x_1\in \pt\Omega$. Assume that $x_1=0$ and $\Omega\cap B_{r_1}(x_1)=B_{r_1}^+$. Let $\theta_2$ be given in Lemma \ref{boundary M} and $B_{\rho}(y)\subset B_{\theta_2r_1/6}$. Then we have either $B_\rho(y)\cap T_1\neq\emptyset$ or $B_\rho(y)\cap T_1=\emptyset$.

If $B_\rho(y)\cap T_1\neq\emptyset$, we define $P(y)=(y',0)$ for $y=(y',y_m)$. Then $B_\rho(y)\subset B_{2\rho}(P(y))$.
Thus, by choosing $\varepsilon_1\leq\varepsilon_2$ and $2\delta_1\leq \delta_2$, Lemma \ref{boundary M} gives us that
\begin{equation}
 \begin{aligned}
\rho^{1-m}\int_{B_{\rho}(y)\cap B^+_{\theta_2r_1/6}}|Du|&\leq 2^{m-1}(2\rho)^{1-m}\int_{B_{2\rho}(P(y))\cap B^+_{\theta_2r_1/6}}|Du|\\
&\leq 2^{m-1}\left(\frac{1}{6^m} M(P(y),\rho_2)+Cr_1+ \frac{1}{6^{m+1}}r_1^{1-\frac{2m}{q}}\right)\\
&\leq \frac14 M(0,r_1)+Cr_1+r_1^{1-\frac{2m}{q}},
\end{aligned}
\end{equation}where $2\rho=\theta_2\rho_2$. 

If  $B_\rho(y)\cap T_1=\emptyset$ and $\rho\leq\frac15|y_m|$. It follows from the proof of Lemma 3.4 in \cite{wang2009regularity} that
\begin{equation}
 \begin{aligned}
\rho^{1-m}\int_{B_{\rho}(y)} |Du|&\leq M(y,\rho)\leq M(y,|y_m|/5)\\
&\leq\frac14 M(y,|y_m|)\leq \frac14M(P(y),2|y_m|).
\end{aligned}
\end{equation}
Since $B_{|y_m|}(y)\subset B^+_{2|y_m|}(P(y))\subset B^+_{\theta_2r_1/3}(P(y))\subset B^+_{\theta_2r_1/2}(0)$, we have
\begin{equation}
\rho^{1-m}\int_{B_{\rho}(y)} |Du|\leq \frac14M(0,r_1).
\end{equation}

If  $B_\rho(y)\cap T_1=\emptyset$ and $\rho>\frac15|y_m|$. Then
\begin{equation}
 \begin{aligned}
\rho^{1-m}\int_{B_{\rho}(y)} |Du|&\leq\rho^{1-m}\int_{B^+_{\rho+|y_m|}(P(y))} |Du|\\
&\leq\left(\frac{\rho+|y_m|}{\rho} \right)^{m-1}(\rho+|y_m|)^{1-m}\int_{B^+_{\rho+|y_m|}(P(y))} |Du|\\
&\leq \frac16M(0,r_1)+Cr_1+r_1^{1-\frac{2m}{q}}.
\end{aligned}
\end{equation}

Thus, \eqref{M monotone} holds for $\theta_1=\frac16\theta_2$.

\textit{Case 2.} $x_1\in \Omega$. Let $d_1={\rm dist}(x_1,\pt\Omega)$. If $d_1\geq r_1$, then $B(x_1,r_1)\subset\Omega$ and Lemma 3.4 in \cite{wang2009regularity} implies that for any $\theta_1\in(0,\frac{1}{12}]$, we have
\begin{equation}
M(x_1,\theta_1r_1)\leq \frac14 M(x_1,r_1).
\end{equation}

If $d_1\leq r_1$, then $|x_1-P(x_1)|={\rm dist}(x_1,\pt\Omega)$. Let $k_0$ be sufficiently large (to be determined later). Then
we consider two cases: (a) $r_1/2^{k_0}\leq |x_1-P(x_1)|\leq r_1$; (b) $ |x_1-P(x_1)|<r_1/2^{k_0}$. In case (a), since $B(x_1,r_1/2^{k_0})\subset \Omega$, Lemma 3.4 in \cite{wang2009regularity} implies that for any $\theta_1\in(0,\frac{1}{12}]$, we have
\begin{equation}
M(x_1,\theta_1r_1/2^{k_0})\leq \frac14 M(x_1,r_1/2^{k_0})\leq \frac14 M(x_1,r_1).
\end{equation}
In case (b), for any $0<\delta<1$, note that 
\begin{equation}
B_{\delta r_1}(x_1)\subset B_{(1/2^{k_0}+\delta)r_1}(P(x_1)),  \ \  B_{(1-1/2^{k_0})r_1}(P(x_1))\subset B_{r_1}(x_1).
\end{equation}
Hence, 
\begin{equation}
 \begin{aligned}
&M(x_1, {\delta r_1})\leq M(P(x_1),{(1/2^{k_0}+\delta)r_1}), \\  
&M(P(x_1),{(1-1/2^{k_0})r_1})\leq M(x_1,{r_1}).
\end{aligned}
\end{equation}

Now, we choose $k_0$, $\delta=\frac18\theta_2$ such that $1/2^{k_0}+\frac18\theta_2\leq \frac16\theta_2(1-1/2^{k_0})$, where $\theta_2$ is given by Lemma \ref{boundary M}. Then
\begin{equation}
 \begin{aligned}
 M(x_1, {\theta_2 r_1/8})
&\leq M(P(x_1),{(1/2^{k_0}+\theta_2/8)r_1})\\
&\leq\frac14 M(P(x_1),{(1-1/2^{k_0})r_1})+C_1r_1+r_1^{1-\frac{2m}{q}}\\
&\leq\frac14 M(x_1,{r_1})+C_1r_1+r_1^{1-\frac{2m}{q}}
\end{aligned}
\end{equation}
and Lemma \ref{M} is proved with $\theta_1= \theta_2/2^{k_0}$.

\end{proof}

Now, we can prove Theorem \ref{sebr} to end this section.

\begin{proof}[\bf Proof of Theorem \ref{sebr}]
For any $x\in B_{r_0/2}(x_0)$, $r\in (0,\frac{r_0}{2}]$, \eqref{monotonicity} implies that we can choose $\varepsilon_0$ and $\delta_0$ such that 
\begin{equation}
r^{2-m}\int_{B_r(x)\cap \Omega}|Du|^2\leq \varepsilon_1^2,
\end{equation}
where $\varepsilon_1$ is given in Lemma \ref{M}. Applying Lemma \ref{M}, we know there exists $\theta_1\in (0,1/4)$ such that
\begin{equation}\label{m}
M(x,\theta_1r)\leq \frac14 M(x,r)+C_1r+r^{1-\frac{2m}{q}}.
\end{equation}
for any $x\in B_{r_0/2}(x_0)$, $r\in (0,\frac{r_0}{4}]$. 

For any $r\in (0,\frac{r_0}{4}]$, there exists an integer $k$ such that $\theta_1^{k+1}R<r\leq\theta^{k}R$, where $R=\frac{r_0}{4}$.
It follows from \eqref{m} that
\begin{equation}
 \begin{aligned}
&M(x,r)\\
&\leq M(x,\theta_1^{k}R)\\
&\leq \frac{1}{4^k}M(x,R)+\frac{1}{4^{k-1}}(C_1R+R^{1-\frac{2m}{q}})(1+4\theta_1+\cdots+(4\theta_1)^{k-1})\\
&\leq \exp\left(k\log \frac14\right)M(x,R)+\frac{4}{1-4\theta_1}(C_1R+R^{1-\frac{2m}{q}})\exp\left(k\log\frac14\right)\\
&\leq\left(M(x,R)+\frac{4}{1-4\theta_1}(C_1R+R^{1-\frac{2m}{q}})\right)\exp\left(\left(\frac{\log\frac{r}{R}}{\log\theta_1}-1\right)\log\frac14\right)\\
&\leq\left(4M(x,R)+\frac{16}{1-4\theta_1}(C_1R+R^{1-\frac{2m}{q}})\right)\left(\frac{r}{R}\right)^{\alpha_0}\\
&\leq Cr^{\alpha_0},
\end{aligned}
\end{equation}
where $\alpha_0=\frac{\log\frac14}{\log\theta_1}$. Hence, Theorem \ref{sebr} follows from the  Morrey's decay lemma in 
\cite{giaquinta1983multiple}.

\end{proof}

\vspace{2em}

\section{Proof of Theorem \ref{holder}} 

\begin{proof}[Proof of Theorem \ref{holder}]

Let 
\begin{equation}
\mathcal{S}=\{x\in M_{\rho_0}:\liminf_{r\to0}r^{2-m}\int_{B_r(x)\cap  M_{\rho_0}}|Du|^2>\varepsilon_0^2 \}
\end{equation}
where $\varepsilon_0$ is the constant in  Theorem \ref{sebr}. It follows from a standard covering argument that  $H^{m-2}(\mathcal{S})=0$.
For any $x_0\in  M_{\rho_0}\setminus\mathcal{S}$, there exists $r_0\in (0,\delta_0)$, where $\delta_0$ is the constant in  Theorem \ref{sebr}, such that
\begin{equation}
    r_0^{2-m}\int_{M\cap B_{r_0}(x_0)}|\nabla u|^2\leq \varepsilon_0^2.
\end{equation}
Then Theorem \ref{sebr} implies  $u\in C^{\alpha_0}( \bar{M}\cap B_{r_0/2}(x_0),N)$ for some $\alpha_0\in(0,1)$. Hence,  $u\in C^{\alpha_0}( M_{\rho_0}\setminus\mathcal{S},N)$.

For $\phi\in C^{1,1}(\partial M,N)$, we can show that for any $x_0\in \pt M\setminus\mathcal{S}$, it holds
\begin{equation}\label{u22}
    r^{2-m}\int_{M\cap B_{r}(x_0)}|\nabla u|^2\leq C(\beta)r^{2\beta}.
\end{equation}
 where $0<r\leq\tilde r_0< \frac{r_0}{2}$, $0<\beta<1-\frac{2m}{q}$.

 To do so, we define $v\in H^1(B_r(x_0),\mathbb{R}^K)$ as
 \begin{equation}
\left\{
 \begin{aligned}
		&\Delta v=0, \ \text{in} \ B_r(x_0),\\
		&v=u_\phi, \ \text{on} \ \pt B_r(x_0),
\end{aligned}
\right.
 \end{equation}
and the maximum principle implies that
 \begin{equation}
 \|v-u_\phi\|_{L^\infty(B_r(x_0))}\leq {\rm osc}_{B_r(x_0)}u_\phi\leq Cr^{\alpha_0}.
 \end{equation}
Then we have
\begin{equation}\label{uphi-v}
 \begin{aligned}
&\int_{B_r(x_0)}|\nabla (u_\phi-v)|^2\\
&=-\int_{B_r(x_0)}\langle\Delta(u_\phi-v),u_\phi-v\rangle\\
&=-\int_{B_r(x_0)}\langle\Delta u_\phi,u_\phi-v\rangle\\
&\leq\int_{B_r(x_0)}|\Delta u_\phi||u_\phi-v|\\
&\leq Cr^{\alpha_0}\left(\int_{B^+_r(x_0)}|\Delta u|+\int_{B^+_r(x_0)}|\Delta \phi|\right)\\
&\leq Cr^{\alpha_0}\left(\int_{B^+_r(x_0)}|\nabla u|^2+\int_{B^+_r(x_0)}|\nabla u||\psi|^2+\int_{B^+_r(x_0)}|\Delta \phi|\right)\\
&\leq Cr^{\alpha_0}\left(\int_{B^+_r(x_0)}|\nabla u|^2+r^{m-\frac{4m}{q}}+r^{m}\right).
\end{aligned}
\end{equation}
Hence, the standard estimate on harmonic functions yields
\begin{equation*}
 \begin{aligned}
&(\theta r)^{2-m}\int_{B_{\theta r}(x_0)}|\nabla u_\phi|^2\\
&\leq 2\left((\theta r)^{2-m}\int_{B_{\theta r}(x_0)}|\nabla v|^2+(\theta r)^{2-m}\int_{B_{\theta r}(x_0)}|\nabla(u_\phi-v)|^2\right)\\
&\leq 2\left(\theta^2 r^{2-m}\int_{B_{ r}(x_0)}|\nabla v|^2+(\theta r)^{2-m}\int_{B_{\theta r}(x_0)}|\nabla(u_\phi-v)|^2\right)\\
&\leq 4\theta^2 r^{2-m}\int_{B_{ r}(x_0)}|\nabla u_\phi|^2+6(\theta r)^{2-m}\int_{B_{r}(x_0)}|\nabla(u_\phi-v)|^2\\
&\leq 4\theta^2 r^{2-m}\int_{B_{ r}(x_0)}|\nabla u_\phi|^2+C\theta^{2-m}r^{2-m+\alpha_0}\left(\int_{B^+_r(x_0)}|\nabla u|^2+r^{m-\frac{4m}{q}}+r^{m}\right)\\
&\leq 4\theta^2 r^{2-m}\int_{B_{ r}(x_0)}|\nabla u_\phi|^2+C\theta^{2-m}r^{2-m+\alpha_0}\left(\int_{B_r(x_0)}|\nabla u_\phi|^2+r^m+r^{m-\frac{4m}{q}}\right)\\
&\leq (4\theta^2+C\theta^{2-m}r^{\alpha_0}) r^{2-m}\int_{B_{ r}(x_0)}|\nabla u_\phi|^2+C\theta^{2-m}r^{\alpha_0}(r^2+r^{2-\frac{4m}{q}}).
\end{aligned}
\end{equation*}
Now, we can choose $\tilde r_0<\frac{r_0}{2}$ and $\tilde\theta_0=\tilde\theta_0(\tilde r_0,\beta,m)\in (0,\frac12)$ such that for any $0<r\leq \tilde r_0$ we have
\begin{equation}\label{u phi}
(\tilde\theta_0r)^{2-m}\int_{B_{\theta r}(x_0)}|\nabla u_\phi|^2\leq \tilde\theta_0^{2\beta} r^{2-m}\int_{B_{ r}(x_0)}|\nabla u_\phi|^2+\tilde\theta_0^{2\beta}r^{2-\frac{4m}{q}}.
\end{equation}
Then  the iteration of \eqref{u phi} yields
\begin{equation}
 r^{2-m}\int_{B_{ r}(x_0)}|\nabla u_\phi|^2\leq C(\beta)r^{2\beta}
\end{equation}
for $0<\beta<1-\frac{2m}{q}$, and \eqref{u22} follows.

For $0<\beta<1-\frac{2m}{q}$, it follows from \eqref{u22} and \eqref{uphi-v} that
\begin{equation}\label{uphi-v'}
 \begin{aligned}
\int_{B_r(x_0)}|\nabla (u_\phi-v)|^2
&\leq Cr^{\alpha_0}\left(\int_{B^+_r(x_0)}|\nabla u|^2+r^{m-\frac{4m}{q}}\right)\\
&\leq C(\beta)r^{\alpha_0}(r^{m-2+2\beta}+r^{m-\frac{4m}{q}})\\
&\leq C(\beta)r^{m-2+\alpha_0+2\beta}.
\end{aligned}
\end{equation}
Hence, using the Campanato estimate for harmonic functions, we have
\begin{equation}\label{uphi-average}
 \begin{aligned}
&(\theta r)^{-m}\int_{B_{\theta r}(x_0)}|\nabla u_\phi-\overline{(\nabla u_\phi)}_{x_0,\theta r}|^2\\
&\leq 2\left((\theta r)^{-m}\int_{B_{\theta r}(x_0)}|\nabla v-\overline{(\nabla v)}_{x_0,\theta r}|^2+(\theta r)^{-m}\int_{B_{\theta r}(x_0)}|\nabla(u_\phi-v)|^2\right)\\
&\leq 2\left(\theta^2 r^{-m}\int_{B_{r}(x_0)}|\nabla v-\overline{(\nabla v)}_{x_0,r}|^2+(\theta r)^{-m}\int_{B_{\theta r}(x_0)}|\nabla(u_\phi-v)|^2\right)\\
&\leq 4\theta^2 r^{-m}\int_{B_{r}(x_0)}|\nabla u_\phi-\overline{(\nabla u_\phi)}_{x_0,r}|^2+6(\theta r)^{-m}\int_{B_{r}(x_0)}|\nabla(u_\phi-v)|^2\\
&\leq 4\theta^2 r^{-m}\int_{B_{r}(x_0)}|\nabla u_\phi-\overline{(\nabla u_\phi)}_{x_0,r}|^2+ C(\beta)\theta^{-m}r^{\alpha_0+2\beta-2}\\
&\leq \frac12 r^{-m}\int_{B_{r}(x_0)}|\nabla u_\phi-\overline{(\nabla u_\phi)}_{x_0,r}|^2+ C(\beta)r^{2\mu}
\end{aligned}
\end{equation}
provided that we choose $\theta=\frac{1}{2\sqrt{2}}$ and $\mu$ satisfying $r^{\alpha_0+2\beta-2}\leq r^{2\mu}$ for $0<r\leq \tilde r_0$, where we have used the assumption $q>\frac{4m}{\alpha_0}$  and
\begin{equation}
\overline{(\nabla u_\phi)}_{x_0,\theta r}=\frac{1}{|B_r(x_0)|}\int_{B_{\theta r}(x_0)}\nabla u_\phi
\end{equation}
is the average of $\nabla u_\phi$ over $B_{\theta r}(x_0)$. Again, the iteration of \eqref{uphi-average} gives us
\begin{equation}
r^{-m}\int_{B_r(x_0)}|\nabla u_\phi-\overline{(\nabla u_\phi)}_{x_0,r}|^2\leq C r^{2\mu}.
\end{equation}for $0<r\leq \tilde r_0$.
This, combined with the interior regularity in \cite{wang2009regularity}, implies $u\in C^{1,\mu}( M_{\rho_0}\setminus\mathcal{S},N)$.

\end{proof}


\vspace{1em}

\vspace{2em}
\bibliographystyle{amsplain}
\bibliography{reference}

\end{document}